\newtheorem{theorem}{Theorem}[section]
\newtheorem{lemma}[theorem]{Lemma}
\newtheorem{proposition}[theorem]{Proposition}
\newtheorem{corollary}[theorem]{Corollary}
\theoremstyle{definition}
\newtheorem{definition}[theorem]{Definition}
\newtheorem{example}[theorem]{Example}
\theoremstyle{remark}
\newtheorem{remark}[theorem]{Remark}
\numberwithin{equation}{section}
\begin{document}

\title{On Linear Dynamics of Sets of Operators}
\author{ Mohamed Amouch and Otmane Benchiheb }

\address{\textsc{Mohamed Amouch and Otmane Benchiheb},
University Chouaib Doukkali.
Department of Mathematics, Faculty of science
Eljadida, Morocco}
\email{amouch.m@ucd.ac.ma}
\email{otmane.benchiheb@gmail.com}
\keywords{Orbit, hypercyclic sets of operators, hypercyclic Operators, $C_0$-semigroup, $C$-regularized group.}
\subjclass[2000]{Primary 47A80, 47A53; secondary 47A10, 47A11.}

\begin{abstract}
Let $X$ be a complex topological vector space with $dim(X)>1$ and $\mathcal{B}(X)$ the set of all continuous linear operators on $X$.
The concept of hypercyclicity for a subset of $\mathcal{B}(X)$, was introduced in \cite{AKH}.
In this work, we introduce the notion of hypercyclic criterion for a subset of $\mathcal{B}(X)$.
We extend some results known for a single operator and $C_0$-semigroup to a subset of $\mathcal{B}(X)$
and we give applications for $C$-regularized groups of operators.
\end{abstract}


\maketitle

\section{Introduction and Preliminary}
Let $X$ be a complex topological vector space with $dim(X)>1$ and $\mathcal{B}(X)$ the set of all continuous linear operators on $X$. By an operator, we always mean a continuous linear operator.
If $T\in\mathcal{B}(X),$ then the orbit of a vector $x\in X$ under $T$ is the set
$$ Orb(T,x):=\{T^{n}x\mbox{ : }n\in\mathbb{N}\}. $$
An operator $T\in\mathcal{B}(X)$ is said to be hypercyclic if there is some vector $x\in X$ such that $Orb(T,x)$ is dense in $X;$
such a vector $x$ is called a hypercyclic vector for $T.$ The set of all hypercyclic vectors for $T$ is denoted by $HC(T).$
Recall \cite{Birkhoff} that $T\in\mathcal{B}(X)$ is said to be topologically transitive if for each pair
$(U,V)$ of nonempty open subsets of $X,$ there exists $n\in\mathbb{N}$ such that
$$T^{n}(U)\cap V\neq \emptyset.$$

The hypercyclicity criterion for a single operator was introduced in \cite{Bes, Gethner Shapiro, Kitai}.
It provides several sufficient conditions that ensure hypercyclicity.
We say that an operator $T\in\mathcal{B}(X)$ satisfies the hypercyclicity criterion if there exist an increasing sequence of integers $(n_k)$, two dense sets $X_0$, $Y_0\subset X$ and a sequence of maps $S_{n_k}$ : $Y_0\longrightarrow X$ such that$:$
\begin{itemize}
\item[$(i)$] $T^{n_k}x\longrightarrow 0$ for any $x\in X_0$;
\item[$(ii)$] $S_{n_k}y\longrightarrow 0$ for any $y\in Y_0$;
\item[$(iii)$] $T^{n_k} S_{n_k}y\longrightarrow y$ for any $y\in Y_0$.
\end{itemize}
For a general overview of hypercyclicity and related properties in linear dynamics,
see \cite{Bayart Matheron, BMP,Feldman, Godefroy Shapiro,GED,Erdmann Peris, GEU, MS,Muller}.

Recall \cite[Definition 1.5.]{Erdmann Peris}, that an operator $T\in\mathcal{B}(X)$ is called quasi-conjugate or quasi-similar to an operator $S\in\mathcal{B}(Y)$ if there exists a continuous map $\phi$ : $Y\longrightarrow X$ with dense range such that $T\circ\phi=\phi\circ S.$ If $\phi$ can be chosen to be a homeomorphism, then $T$ and $S$ are called conjugate or similar.
Recall \cite[Definition 1.7.]{Erdmann Peris}, that a property $\mathcal{P}$ is said to be preserved under quasi-similarity if the following holds$:$ if an operator $T\in\mathcal{B}(X)$ has property $\mathcal{P}$, then every operator $S\in\mathcal{B}(Y)$ that is quasi-similar to $T$ has also property $\mathcal{P}$.
If $\Gamma\subset \mathcal{B}(X)$ and $x\in X$, then the orbit of $x$ under $\Gamma$ is the set
$$Orb(\Gamma,x)=\{Tx\mbox{ : }T\in\Gamma\}.$$
The notion of hypercyclic set of operators, was introduced in  \cite{AKH}:
A subset $\Gamma$ of $\mathcal{B}(X)$ is said to be hypercyclic set of operators or hypercyclic set,
if there exists some $x\in X$ such that $Orb(\Gamma,x)$ is a dense subset of $X;$
such vector $x$ is called a hypercyclic vector for $\Gamma$ or hypercyclic vector.
The set of all hypercyclic vectors for $\Gamma$ is denoted by $HC(\Gamma)$.\\
It is clear that $T\in\mathcal{B}(X)$ is a hypercyclic operator if and only if $\Gamma=\{T^n\mbox{ : }n\in\mathbb{N}\}$
is a hypercyclic set. In this case, we write $Orb(T,x)$ instead of $Orb(\Gamma,x).$

Recall that a subset of a topological space $X$ is called a $G_\delta$ set
if and only if it is a countable intersection of open sets, see\cite{Willered}.

In \cite{Birkhoff}, Birkhoff has shown that the set of hypercyclic vectors of a single operator is a $G_\delta$ set.
In section 2, we show that this result holds for the set of hypercyclic vectors of a set of operators.
 We also introduce the notion of quasi-similarity for sets of operators. We prove that hypercyclicity 
 for a set of operators is preserved under quasi-similarity.
In section 3, we study the notion of topologically transitive set introduced in \cite{AKH}. In addition,
we introduce the notion of hypercyclic criterion for sets of operators and we
give relations between these two notions and the concept of hypercyclicity for sets of operators.
We also prove that the topological transitivity is preserved under quasi-similarity.
In section 4, we give applications for $C$-regularized group of operators. We prove that, if $(S(z))_{z\in\mathbb{C}}$ is a hypercyclic $C$-regularized group of operators and $C$ has a dense range, then $(S(z))_{z\in\mathbb{C}}$ is topologically transitive.
\section{Hypercyclic Sets of Operators}
We begin this section by given an example of hypercyclic set of operators. This example shows that there exists a hypercyclic set that is not of the form $\{T^n\mbox{: }n\in\mathbb{N}\}$, where $T\in\mathcal{B}(X)$.
\begin{example}\label{exa1}
Let $X=\ell^{2}(\mathbb{N})$. For all polynomial $p=a_0+a_1x+\dots +a_n x^n$ with $a_0\dots a_n\in\mathbb{C}$
and $n\in\mathbb{N}$. Let $T_p$ be an operator defined on $\ell^2(\mathbb{N})$ by
$$ \begin{array}{ccccc}
T_p & : & \ell^2(\mathbb{N}) & \longrightarrow & \ell^2(\mathbb{N}) \\
 & & (x_k)_{k\in\mathbb{N}} & \longmapsto & (a_0x_0,a_1x_1,\dots,a_n x_{n},0,\dots). \\
\end{array} $$
Let $e=(e_k)_{k\in\mathbb{N}}\in\ell^2(\mathbb{N})$ an orthonormal basis of $\ell^2(\mathbb{N})$ and $\Gamma=\{T_p \mbox{ : }p \mbox{ polynomial }\}$, then
$$  Orb(\Gamma,x)=\{(a_0e_0,a_1e_1,\dots,a_n e_{n},0,\dots)\mbox{ : } a_0,\dots,a_n\in\mathbb{C} \mbox{, } n\in\mathbb{N}\}=\mbox{span}\{e_n \mbox{ : }n\in\mathbb{N}\} . $$
Hence, $\overline{Orb(\Gamma,x)}=\overline{\mbox{span}\{ Orb(\Gamma,x) \}}=\ell^2(\mathbb{N})$, and so $\Gamma$ is a hypercyclic set of operators.
\end{example}
Let $T\in\mathcal{B}(X)$ be a hypercyclic operator. Bourdon and Feldman \cite{Bourdon Feldman} proved that if $x\in HC(T)$, then $Tx\in HC(T)$ and if $p$ is a nonzero polynomial, then $p(T)$ has dense range. These results does not hold for every hypercyclic set of operators. Indeed, if $\Gamma$ is defined as in Example \ref{exa1}, then $(e_k)_{k\in\mathbb{N}}$ is a hypercyclic vector for $\Gamma$, but $T_p((e_k)_{k\in\mathbb{N}})$ is not a hypercyclic for $\Gamma$, for all polynomial $p$. Moreover, $T_p$ is not of dense range, for all polynomial $p$. However, we have the following result.

Let $\Gamma\subset \mathcal{B}(X)$. We denote by $\{\Gamma\}^{'}$ the set of all elements of $\mathcal{B}(X)$ which commute with every element of $\Gamma$.
\begin{proposition}\label{prop1}
Let $\Gamma\subset\mathcal{B}(X)$ be a hypercyclic set and $T\in\mathcal{B}(X)$ be an operator with dense range. 
If $T\in\{\Gamma\}^{'}$, then $Tx\in HC(\Gamma)$ for all $x\in HC(\Gamma).$
\end{proposition}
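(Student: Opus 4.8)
The plan is to reduce the claim to the single observation that a continuous operator with dense range sends dense subsets to dense subsets. The starting point is to unwind the definitions: proving $Tx\in HC(\Gamma)$ amounts to showing that $Orb(\Gamma,Tx)=\{S(Tx)\mbox{ : }S\in\Gamma\}$ is dense in $X$, while the hypothesis $x\in HC(\Gamma)$ gives us that $Orb(\Gamma,x)=\{Sx\mbox{ : }S\in\Gamma\}$ is already dense in $X$. So the whole problem is to transfer denseness from the second orbit to the first.

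The key step is the commutation trick. Since $T\in\{\Gamma\}^{'}$, we have $ST=TS$ for every $S\in\Gamma$, hence $S(Tx)=T(Sx)$ for all $S\in\Gamma$. This identity lets me factor $T$ out of the orbit entirely, so that
$$Orb(\Gamma,Tx)=\{T(Sx)\mbox{ : }S\in\Gamma\}=T\big(Orb(\Gamma,x)\big).$$
Thus the task becomes: show that $T\big(Orb(\Gamma,x)\big)$ is dense in $X$, knowing that $Orb(\Gamma,x)$ is dense.

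For this last point I would invoke continuity and dense range of $T$. By continuity, for any subset $A\subset X$ one has $T(\overline{A})\subseteq\overline{T(A)}$; applying this with $A=Orb(\Gamma,x)$ and using $\overline{A}=X$ yields $T(X)\subseteq\overline{T(Orb(\Gamma,x))}=\overline{Orb(\Gamma,Tx)}$. Taking closures and using that $T$ has dense range, i.e. $\overline{T(X)}=X$, I obtain $X=\overline{T(X)}\subseteq\overline{Orb(\Gamma,Tx)}$, so $Orb(\Gamma,Tx)$ is dense and $Tx\in HC(\Gamma)$.

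I do not expect a genuine obstacle here: once the commutation identity $Orb(\Gamma,Tx)=T(Orb(\Gamma,x))$ is in hand, the argument is the standard fact that continuous dense-range maps preserve density, and both the commutation and the density hypotheses are used exactly once. The only point requiring a little care is making sure the orbit is read as a set and that the equality $Orb(\Gamma,Tx)=T(Orb(\Gamma,x))$ is an honest set equality, which it is because $T$ commutes with \emph{every} element of $\Gamma$.
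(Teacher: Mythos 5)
Your proof is correct and uses exactly the same two ingredients as the paper's proof: the commutation identity $S(Tx)=T(Sx)$ and the fact that a continuous map with dense range sends dense sets to dense sets. The only difference is presentational — you package the second ingredient as the global set identity $Orb(\Gamma,Tx)=T(Orb(\Gamma,x))$ followed by $T(\overline{A})\subseteq\overline{T(A)}$, whereas the paper runs the equivalent open-set argument via $T^{-1}(O)$ — so this is essentially the paper's proof.
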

\begin{proof}
Let $O$ be a nonempty open subset of $X$. Since $T$ is continuous and and of dense range, $T^{-1}(O)$ is a nonempty open subset of $X$. Let $x\in HC(\Gamma)$, then there exists $S\in\Gamma$ such that $ Sx\in T^{-1}(O)$, that is $ T(Sx)\in O$. Since $T\in\{\Gamma\}^{'}$, it follows that
$$ S(Tx)= T(Sx)\in O.$$
Hence $Orb(\Gamma,Tx)$ meets every nonempty open subset of $X$;
consequently, $Orb(\Gamma,Tx)$ is dense in $X$, which implies that $Tx\in HC(\Gamma).$
\end{proof}
\begin{corollary}
Let $\Gamma\subset\mathcal{B}(X)$ be a hypercyclic set. If $x\in HC(\Gamma)$, then $\alpha x\in HC(\Gamma)$, for all $\alpha\in\mathbb{C}\setminus\{0\}.$
\end{corollary}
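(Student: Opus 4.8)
The plan is to deduce this directly from Proposition \ref{prop1} by taking the operator $T$ to be a suitable scalar multiple of the identity. Given $\alpha\in\mathbb{C}\setminus\{0\}$, I would set $T=\alpha I$, where $I$ denotes the identity operator on $X$, and then simply check that $\alpha I$ satisfies the two hypotheses of Proposition \ref{prop1}.

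First I would verify that $\alpha I$ has dense range. Since $\alpha\neq 0$, the operator $\alpha I$ is invertible with inverse $\alpha^{-1}I$; in particular it is surjective, hence of dense range. Next I would observe that a scalar multiple of the identity commutes with every element of $\mathcal{B}(X)$, so in particular $\alpha I$ commutes with every element of $\Gamma$, that is $\alpha I\in\{\Gamma\}^{'}$.

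With both hypotheses in place, I would apply Proposition \ref{prop1} with $T=\alpha I$: for any $x\in HC(\Gamma)$ it yields $(\alpha I)x=\alpha x\in HC(\Gamma)$, which is exactly the desired conclusion.

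There is essentially no obstacle here, as the corollary is an immediate specialization of the preceding proposition. The only point worth flagging is that the nonvanishing of $\alpha$ is precisely what makes $\alpha I$ a dense-range operator, so the restriction $\alpha\neq 0$ is exactly what is needed for the hypothesis of Proposition \ref{prop1} to hold.
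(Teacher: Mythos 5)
Your proposal is correct and coincides with the paper's own argument: the paper likewise sets $T=\alpha I$, notes that it has dense range and lies in $\{\Gamma\}^{'}$, and applies Proposition \ref{prop1}. Your additional remark that invertibility of $\alpha I$ (for $\alpha\neq 0$) is what gives dense range is a fine, if implicit, elaboration of the same reasoning.
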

\begin{proof}
Let $\alpha \in \mathbb{C}\setminus\{0\}$ and $x\in HC(\Gamma)$. Then $T=\alpha I$ is a continuous map with dense range and $T\in\{\Gamma\}^{'}$. Hence, by Proposition \ref{prop1}, $\alpha x\in HC(\Gamma)$.
\end{proof}
In the following Definition, we introduce the notion of quasi-similarity for sets of operators.
\begin{definition}
Let $X$ and $Y$ be topological vector spaces and let $\Gamma\subset \mathcal{B}(X)$ and $\Gamma_1\subset \mathcal{B}(Y)$. We say that $\Gamma$ and $\Gamma_1$ are quasi-similar if there exists a continuous map $\phi$ : $X\longrightarrow Y$ with dense range such that for all $T\in\Gamma,$ there exists $S\in\Gamma_1$ satisfying $S\circ\phi=\phi\circ T$. If $\phi$ is
a homeomorphism, then $\Gamma$ and $\Gamma_1$ are called similar.
\end{definition}
In \cite{HL}, Herrero showed that the hypercyclicity of operators is preserved under quasi-similarity, see also \cite[Proposition 2.24]{Erdmann Peris}.
In the following proposition, we prove that this result holds for sets of operators.

\begin{proposition}\label{14}
Let $X$ and $Y$ be topological vector spaces and let $\Gamma\subset\mathcal{B}(X)$ be quasi-similar to $\Gamma_1\subset\mathcal{B}(Y)$.
If $\Gamma$ is hypercyclic in $X$, then $\Gamma_1$ is hypercyclic in $Y$. Furthermore, 
$$\phi(HC(\Gamma))\subset HC(\Gamma_1).$$
\end{proposition}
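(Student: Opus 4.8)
The plan is to prove the stronger inclusion $\phi(HC(\Gamma))\subset HC(\Gamma_1)$ directly; the hypercyclicity of $\Gamma_1$ then follows at once, since $HC(\Gamma)\neq\emptyset$ by hypothesis and $\phi$ is defined on all of $X$. So I would fix an arbitrary $x\in HC(\Gamma)$ and aim to show that $\phi(x)$ is a hypercyclic vector for $\Gamma_1$, i.e. that $Orb(\Gamma_1,\phi(x))$ is dense in $Y$.

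The first step is an orbit-containment computation. For each $T\in\Gamma$, the quasi-similarity hypothesis furnishes some $S\in\Gamma_1$ with $S\circ\phi=\phi\circ T$, so that $\phi(Tx)=S(\phi(x))\in Orb(\Gamma_1,\phi(x))$. Ranging over all $T\in\Gamma$ gives the key inclusion $\phi(Orb(\Gamma,x))\subset Orb(\Gamma_1,\phi(x))$. Thus it suffices to show that the left-hand set is already dense in $Y$.

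The second step transfers density through $\phi$. Since $\phi$ is continuous, $\phi(\overline{Orb(\Gamma,x)})\subset\overline{\phi(Orb(\Gamma,x))}$; because $x\in HC(\Gamma)$ we have $\overline{Orb(\Gamma,x)}=X$, whence $\phi(X)\subset\overline{\phi(Orb(\Gamma,x))}$. Passing to closures and using that $\phi$ has dense range yields $Y=\overline{\phi(X)}\subset\overline{\phi(Orb(\Gamma,x))}$, so $\phi(Orb(\Gamma,x))$ is dense in $Y$. Combining this with the inclusion from the first step shows $Orb(\Gamma_1,\phi(x))$ is dense in $Y$, i.e. $\phi(x)\in HC(\Gamma_1)$, completing the argument.

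The proof is short, and the only point requiring care is the elementary topological fact that a continuous map with dense range carries dense sets to dense sets; the main obstacle is simply to record this cleanly in terms of closures rather than to produce any genuinely new estimate. Everything else is the bookkeeping of the orbit inclusion, which is immediate from the intertwining relations $S\circ\phi=\phi\circ T$ guaranteed by quasi-similarity.
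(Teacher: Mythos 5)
Your proof is correct and follows essentially the same route as the paper's: the intertwining relations give $\phi(Orb(\Gamma,x))\subset Orb(\Gamma_1,\phi(x))$, and the continuity and dense range of $\phi$ transfer the density of the orbit. The only difference is presentational — you argue globally with closures, while the paper tests density against an arbitrary nonempty open set $O\subset Y$ via the preimage $\phi^{-1}(O)$ — but the ingredients and the logic are identical.
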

\begin{proof}
Let $O$ be a nonempty open subset of $Y$, then $\phi^{-1}(O)$ is a nonempty open subset of $X$. Let $x\in HC(\Gamma)$, then there exists $T\in\Gamma$ such that $Tx\in \phi^{-1}(O)$, that is $\phi(Tx)\in O$. Let $S\in\Gamma_1$ such that $S\circ\phi=\phi\circ T$, then
$S(\phi (x)) =\phi(Tx)\in O$.
Thus, $Orb(\Gamma_1,\phi (x))$ meets every nonempty open set of $Y$,
that is $Orb(\Gamma_1,\phi (x))$ is dense in $Y$. Hence, $\Gamma_1$ is hypercyclic and 
$\phi (x)\in HC(\Gamma_1).$
\end{proof}
\begin{corollary}
Let $X$ and $Y$ be topological vector spaces and let $\Gamma\subset\mathcal{B}(X)$ be similar to $\Gamma_1\subset\mathcal{B}(Y)$.
If $\Gamma$ is hypercyclic in $X$, then $\Gamma_1$ is hypercyclic in $Y$. Furthermore,  
$$\phi(HC(\Gamma))=HC(\Gamma_1).$$
\end{corollary}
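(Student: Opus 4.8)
The plan is to deduce everything from Proposition \ref{14}, applied twice. First note that a homeomorphism $\phi : X \longrightarrow Y$ is in particular continuous and surjective, hence has dense range, so similarity is a special case of quasi-similarity. Thus Proposition \ref{14} applies verbatim and yields at once that $\Gamma_1$ is hypercyclic in $Y$ together with the inclusion $\phi(HC(\Gamma)) \subset HC(\Gamma_1)$. This disposes of the hypercyclicity assertion and of one half of the claimed equality with no extra work.

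For the reverse inclusion $HC(\Gamma_1) \subset \phi(HC(\Gamma))$ the idea is to run Proposition \ref{14} in the opposite direction, using the inverse homeomorphism $\phi^{-1} : Y \longrightarrow X$. Since $\phi$ is a homeomorphism, $\phi^{-1}$ is again continuous with dense range (in fact surjective), so it is an admissible intertwining map. Moreover the intertwining identity $S \circ \phi = \phi \circ T$ can be rewritten, upon composing with $\phi^{-1}$ on both sides, as $T \circ \phi^{-1} = \phi^{-1} \circ S$. Granting that $(\Gamma_1, \Gamma)$ is quasi-similar through $\phi^{-1}$, Proposition \ref{14} gives $\phi^{-1}(HC(\Gamma_1)) \subset HC(\Gamma)$; applying $\phi$ to both sides and using that $\phi$ is a bijection turns this into $HC(\Gamma_1) \subset \phi(HC(\Gamma))$, which combined with the first step produces the desired equality.

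The one point that requires genuine care --- and the place where the homeomorphism hypothesis does work beyond the quasi-similar setting --- is verifying that $(\Gamma_1, \Gamma)$ is indeed quasi-similar via $\phi^{-1}$, i.e. that for every $S \in \Gamma_1$ there exists $T \in \Gamma$ with $T \circ \phi^{-1} = \phi^{-1} \circ S$. Reading the original relation as $S = \phi T \phi^{-1}$, this amounts to the assertion that the conjugation map $T \longmapsto \phi T \phi^{-1}$ carries $\Gamma$ \emph{onto} $\Gamma_1$, that is, that the similarity relation between $\Gamma$ and $\Gamma_1$ is symmetric. I expect this to be the main (and essentially only) obstacle: the definition as phrased gives, for each $T \in \Gamma$, some matching $S \in \Gamma_1$, but the equality of hypercyclic vector sets forces one to know that no element of $\Gamma_1$ is left unmatched, since otherwise $Orb(\Gamma, \phi^{-1}(y))$ would only be the $\phi^{-1}$-image of a possibly proper subset of the dense set $Orb(\Gamma_1, y)$ and its density could fail. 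Once this symmetry is recorded, the rest is the routine double application of Proposition \ref{14} described above.
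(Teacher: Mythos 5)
The paper states this corollary without proof, and your strategy---apply Proposition \ref{14} once with $\phi$ and once with $\phi^{-1}$---is certainly the intended one; the first application is correct and gives both the hypercyclicity of $\Gamma_1$ and the inclusion $\phi(HC(\Gamma))\subset HC(\Gamma_1)$. The problem is with the reverse inclusion, and you have located it exactly but not closed it: the step you defer with ``once this symmetry is recorded'' is not something that can be recorded, because it is false under the definition as literally stated. The definition of (quasi-)similarity only requires that \emph{for each} $T\in\Gamma$ \emph{there exists} $S\in\Gamma_1$ with $S\circ\phi=\phi\circ T$; it does not require the conjugation $T\mapsto\phi T\phi^{-1}$ to be onto $\Gamma_1$, so $(\Gamma_1,\Gamma)$ need not be quasi-similar via $\phi^{-1}$ and the second application of Proposition \ref{14} is not licensed.

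This is not a cosmetic issue: the equality $\phi(HC(\Gamma))=HC(\Gamma_1)$ can actually fail. Take $X=Y$, $\phi=I$, let $T$ be any hypercyclic operator on $X=\ell^2(\mathbb{N})$, put $\Gamma=\{T^n:n\in\mathbb{N}\}$, and let $\Gamma_1=\Gamma\cup\{T_x:x\in X\}$ with the rank-one operators $T_x$ of Example \ref{2}. Then $\Gamma$ is similar to $\Gamma_1$ via the identity (each $T^n$ is matched with itself), $\Gamma$ is hypercyclic, but $HC(\Gamma_1)\supset X\setminus\ker f$ is strictly larger than $HC(\Gamma)=HC(T)$ (for instance any nonzero non-hypercyclic vector for $T$ outside $\ker f$ lies in $HC(\Gamma_1)\setminus\phi(HC(\Gamma))$). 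So to make the corollary true one must strengthen the hypothesis to a genuinely symmetric notion of similarity, e.g.\ require $\Gamma_1=\{\phi\circ T\circ\phi^{-1}:T\in\Gamma\}$ (equivalently, that every $S\in\Gamma_1$ is matched by some $T\in\Gamma$). Under that reading your argument is complete: $\phi^{-1}$ is a homeomorphism, the intertwining relations invert to $T\circ\phi^{-1}=\phi^{-1}\circ S$, and the two applications of Proposition \ref{14} give the two inclusions. As written, though, the proposal (like the paper's statement) proves only one half of the claimed equality.
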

 The direct sum of two hypercyclic operators is not in general a hypercyclic operator. Indeed, Salas \cite{Salas}, De la Rosa and Read \cite{DR} and Herrero \cite{HL} showed that there exist $T_1$ and $T_2$ hypercyclic operators such that the direct sum $T_1\oplus T_2$ is not hypercyclic. However, if $T_1\oplus T_2$ is hypercyclic, then $T_1$ and $T_2$ are hypercyclic, see \cite[Proposition 2.25]{Erdmann Peris}.
In the following proposition, we prove that this result holds for sets of operators.

Let $\{X_i\}_{i=1}^{n}$ be a family of complex topological vector spaces and let $\Gamma_i$ be a subset of $\mathcal{B}(X_i)$, for all $1\leq i\leq n$. Define
$$\oplus_{i=1}^nX_i=X_1\times X_2\times\dots \times X_n=\{(x_1,x_2,\dots,x_n) \mbox{ : }x_i\in X_i\mbox{, }1\leq i\leq n\}$$
and
$$\oplus_{i=1}^n\Gamma_i=\{T_1\times T_2\times\dots \times T_n\mbox{ : }T_i\in\Gamma_i\mbox{, }1\leq i\leq n\}.$$
\begin{proposition}
Let $\{X_i\}_{i=1}^{n}$ be a family of complex topological vector spaces and $\Gamma_i$ a subset of
$\mathcal{B}(X_i),$ for all $1\leq i\leq n$. If $\oplus_{i=1}^n\Gamma_i$ is a hypercyclic set in $\oplus_{i=1}^n X_i$, then $\Gamma_i$ is a hypercyclic set in $X_i$, for all $1\leq i\leq n$. Moreover, if $(x_1,x_2,\dots,x_n)\in HC(\oplus_{i=1}^n\Gamma_i)$, then $x_i\in HC(\Gamma_i)$, for all $1\leq i\leq n$.
\end{proposition}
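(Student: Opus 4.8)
The plan is to realize each coordinate space as a quasi-similar image of the full direct sum and then invoke Proposition \ref{14}. For each fixed $i$ with $1\leq i\leq n$, consider the canonical projection
$$\pi_i : \oplus_{j=1}^n X_j \longrightarrow X_i, \qquad (y_1,\dots,y_n) \longmapsto y_i.$$
First I would observe that $\pi_i$ is continuous for the product topology and has dense range, since it is in fact surjective. These are exactly the properties required of the map $\phi$ in the definition of quasi-similarity.

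Next comes the key computation. For any element $T_1\times\dots\times T_n$ of $\oplus_{j=1}^n\Gamma_j$, I would verify the intertwining identity
$$\pi_i\circ\left(T_1\times\dots\times T_n\right) = T_i\circ\pi_i,$$
which holds because both sides send $(y_1,\dots,y_n)$ to $T_iy_i$. Since $T_i\in\Gamma_i$, this shows that to every element of $\oplus_{j=1}^n\Gamma_j$ there corresponds an element of $\Gamma_i$ satisfying the required identity with $\phi=\pi_i$. Hence $\oplus_{j=1}^n\Gamma_j$ is quasi-similar to $\Gamma_i$.

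Then I would simply apply Proposition \ref{14}: as $\oplus_{j=1}^n\Gamma_j$ is hypercyclic, it follows that $\Gamma_i$ is hypercyclic and that $\pi_i\left(HC(\oplus_{j=1}^n\Gamma_j)\right)\subset HC(\Gamma_i)$. In particular, if $(x_1,\dots,x_n)\in HC(\oplus_{j=1}^n\Gamma_j)$, then $x_i=\pi_i(x_1,\dots,x_n)\in HC(\Gamma_i)$. Letting $i$ range over $\{1,\dots,n\}$ yields both assertions at once.

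I do not expect a genuine obstacle here; the only point that deserves a moment's care is confirming that the operator produced in the intertwining relation really lies in $\Gamma_i$, but this is immediate from the componentwise definition of $\oplus_{j=1}^n\Gamma_j$. A direct alternative would be to fix a nonempty open set $U\subset X_i$, apply hypercyclicity of $(x_1,\dots,x_n)$ to the open set $U\times\prod_{j\neq i}X_j$, and read off that $Orb(\Gamma_i,x_i)\cap U\neq\emptyset$; however, I would favor the quasi-similarity route since it reuses the machinery just established and makes the result a clean corollary of Proposition \ref{14}.
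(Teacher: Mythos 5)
Your proof is correct, but it takes a different route from the paper. You realize each $\Gamma_i$ as a quasi-similar image of $\oplus_{j=1}^n\Gamma_j$ via the coordinate projection $\pi_i$ and then quote Proposition \ref{14}; the intertwining identity $\pi_i\circ(T_1\times\dots\times T_n)=T_i\circ\pi_i$ and the surjectivity (hence dense range) and continuity of $\pi_i$ are all verified correctly, and the directional conventions match those in the paper's definition of quasi-similarity and in the statement of Proposition \ref{14}, so the conclusion $\pi_i\bigl(HC(\oplus_{j=1}^n\Gamma_j)\bigr)\subset HC(\Gamma_i)$ follows at once. The paper instead gives exactly the ``direct alternative'' you sketch in your last sentence: it fixes nonempty open sets $O_i\subset X_i$, applies density of $Orb(\oplus_{i=1}^n\Gamma_i,(x_1,\dots,x_n))$ to the product open set $O_1\times\dots\times O_n$, and reads off $T_ix_i\in O_i$ for each $i$ simultaneously. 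Your approach buys economy of means --- the result becomes a one-line corollary of the quasi-similarity machinery already established, which mirrors how the single-operator analogue is often derived from \cite[Proposition 2.24]{Erdmann Peris} --- while the paper's direct argument is self-contained, treats all coordinates at once with a single product neighborhood, and does not depend on Proposition \ref{14}. Both are complete proofs of the same statement.
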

\begin{proof}
Let $(x_1,x_2,\dots,x_n)\in HC(\oplus_{i=1}^n\Gamma_i)$. If $O_i$ be a nonempty open subset of $X_i$
for all $1\leq i\leq n$, then $ O_1\times O_2\times\dots\times O_n$ is a nonempty open subset of $\oplus_{i=1}^n X_i$. Since $Orb(\oplus_{i=1}^n\Gamma_i,\oplus_{i=1}^n x_i)$ is dense in $\oplus_{i=1}^n X_i$, there exist $T_i\in\Gamma_i$ such that
$$(T_1\times T_2\times\dots \times T_n)(x_1,x_2,\dots,x_n)=(T_1 x_1,T_2 x_2,\dots,T_n x_n) \in  O_1\times O_2\times\dots\times O_n,$$
that is $T_i x_i\in O_i$, for all $1\leq i\leq n$. Hence, $\Gamma_i$ is a hypercyclic set in $X_i$ and $x_i\in HC(\Gamma_i)$, for all $1\leq i\leq n$.
\end{proof}
 In \cite{Birkhoff}, Birkhoff has shown that the set of hypercyclic vectors of a single operator is a $G_\delta$ set. In what follows, we prove that the same result holds for the set of hypercyclic vectors of a set of operators.
\begin{proposition}\label{5}
Let  $X$ be a second countable Baire  complex topological vector space and $\Gamma$ a subset of $\mathcal{B}(X)$. Then
$$ HC(\Gamma)=\bigcap_{n\geq1}\bigcup_{T\in\Gamma}T^{-1}(U_n),$$
where $(U_n)_{n\geq1}$ is a countable basis of the topology of $X$. As a consequence, $HC(\Gamma)$ is a $G_\delta$ type set.
\end{proposition}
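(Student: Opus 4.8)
The plan is to establish the set-theoretic identity first and then read off the $G_\delta$ conclusion from the continuity of the operators. The starting point is the observation that, since $(U_n)_{n\geq 1}$ is a basis for the topology of $X$, a subset of $X$ is dense if and only if it meets every $U_n$. Applying this to the orbit, a vector $x$ lies in $HC(\Gamma)$ precisely when $Orb(\Gamma,x)$ intersects each $U_n$.

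Next I would translate the intersection condition into a membership condition on $x$. For a fixed $n$, the orbit meets $U_n$ if and only if there is some $T\in\Gamma$ with $Tx\in U_n$, i.e. if and only if $x\in T^{-1}(U_n)$ for some $T\in\Gamma$, which is exactly $x\in\bigcup_{T\in\Gamma}T^{-1}(U_n)$. Requiring this for every $n\geq 1$ yields $x\in\bigcap_{n\geq 1}\bigcup_{T\in\Gamma}T^{-1}(U_n)$, and since each step is an equivalence, reversing the chain gives the opposite inclusion, so the two sets coincide.

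Finally, I would deduce the topological conclusion. Each $T\in\Gamma$ is continuous and each $U_n$ is open, so $T^{-1}(U_n)$ is open; consequently $\bigcup_{T\in\Gamma}T^{-1}(U_n)$ is open as a union of open sets, for every $n$. Hence $HC(\Gamma)$, being the countable intersection over $n\geq 1$ of these open sets, is a $G_\delta$ set.

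As for difficulty, there is no serious obstacle here: the entire content lies in correctly using the basis to reduce density to a countable family of conditions. The only point requiring a little care is recognizing the exact roles of the hypotheses, namely that second countability is what supplies the countable basis making the intersection countable (and thus the result a genuine $G_\delta$ rather than an arbitrary intersection of open sets), while the Baire assumption plays no role in this particular statement and is presumably retained for later use.
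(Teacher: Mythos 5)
Your proposal is correct and follows essentially the same route as the paper: characterize density of the orbit via the countable basis, rewrite the condition of meeting each $U_n$ as membership in $\bigcup_{T\in\Gamma}T^{-1}(U_n)$, and conclude openness of each union from continuity of the operators. Your side remark that the Baire hypothesis is not used in this statement is also accurate.
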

\begin{proof}
Suppose that $\Gamma$ is a hypercyclic set. Then, $x\in HC(\Gamma)$ if and only if $\overline{Orb(\Gamma,x)} = X$.
Equivalently, for all $n\geq1$ we have $U_n\cap Orb(\Gamma,x)\neq\emptyset$. That is, for all $n \geq 1$ there exists
$T\in\Gamma$ such that $x \in T^{-1}(U_n)$. This is equivalent to the fact that $x\in \bigcap_{n\geq1}\bigcup_{T\in\Gamma}T^{-1}(U_n)$.
Since $\bigcup_{T\in\Gamma}T^{-1}(U_n)$ is an open subset of X for all $n \geq 1$, it follows that $HC(\Gamma)$ is a $G_\delta$ type.
\end{proof}
\section{Topologically Transitive Sets of Operators}
Topologically transitivity for a single operator was introduced by Birkhoff in \cite{Birkhoff}.
This notion was generalized to sets of operators in \cite{AKH}.
\begin{definition}\cite{AKH}
A set $\Gamma\subset \mathcal{B}(X)$ is said to be topologically transitive if for each pair of nonempty open subsets in $X$ there exists some $T\in \Gamma$ such that
$$T(U)\cap V\neq \emptyset.$$
\end{definition}
\begin{remark}
Let $T\in \mathcal{B}(X)$. If $\Gamma=\{T^n \mbox{ : }n\in\mathbb{N}\}$, then $\Gamma$ is topologically transitive as a set of operators if and only if $T$ is topologically transitive as an operator.
\end{remark}
The topological transitivity of a single operators is preserved under quasi-similarity \cite[Proposition 1.13]{Erdmann Peris}. The following proposition proves that the same result holds of sets of operators.
\begin{proposition}
Let $X$ and $Y$ be topological vector spaces and let $\Gamma\subset\mathcal{B}(X)$ be quasi-similar to $\Gamma_1\subset\mathcal{B}(Y)$.
If $\Gamma$ is topologically transitive in $X$, then $\Gamma_1$ is topologically transitive in $Y$. 
\end{proposition}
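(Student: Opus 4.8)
The plan is to transport a given pair of nonempty open sets in $Y$ backwards through $\phi$ to a pair of nonempty open sets in $X$, apply the topological transitivity of $\Gamma$ there, and then push the resulting dynamical incidence forward to $Y$ using the intertwining relation $S\circ\phi=\phi\circ T$. This is the exact analogue of the hypercyclicity transfer carried out in Proposition \ref{14}, with ``dense orbit'' replaced by ``open sets interact.''

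First I would fix arbitrary nonempty open subsets $U_1,V_1\subset Y$ and consider their preimages $\phi^{-1}(U_1)$ and $\phi^{-1}(V_1)$. Since $\phi$ is continuous these are open in $X$, and the crucial point is that they are nonempty: because $\phi$ has dense range, $\phi(X)$ meets every nonempty open subset of $Y$, so $\phi(X)\cap U_1\neq\emptyset$ and $\phi(X)\cap V_1\neq\emptyset$, whence $U:=\phi^{-1}(U_1)$ and $V:=\phi^{-1}(V_1)$ are nonempty. Applying the topological transitivity of $\Gamma$ to the pair $(U,V)$ yields some $T\in\Gamma$ with $T(U)\cap V\neq\emptyset$; I then pick a witness $x\in U$ such that $Tx\in V$. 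By construction $\phi(x)\in U_1$ and $\phi(Tx)\in V_1$.

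It remains to produce a single $S\in\Gamma_1$ realizing the incidence in $Y$. Quasi-similarity furnishes $S\in\Gamma_1$ with $S\circ\phi=\phi\circ T$, so that $S(\phi(x))=\phi(Tx)\in V_1$. Since $\phi(x)\in U_1$, the point $S(\phi(x))$ lies in $S(U_1)\cap V_1$, proving this intersection is nonempty. As $U_1$ and $V_1$ were arbitrary, $\Gamma_1$ is topologically transitive in $Y$.

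I expect no serious obstacle here, since the proof is a direct pull-back/push-forward along $\phi$. The one step genuinely requiring care is the nonemptiness of the two preimages, where the dense-range hypothesis on $\phi$ is indispensable; were it dropped, $\phi^{-1}(U_1)$ or $\phi^{-1}(V_1)$ could be empty and the transitivity of $\Gamma$ would give no information. Everything after that is routine bookkeeping with the commutation relation.
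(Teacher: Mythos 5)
Your proof is correct and follows essentially the same route as the paper's: pull the two open sets back through $\phi$ (using continuity for openness and dense range for nonemptiness), apply transitivity of $\Gamma$ to get a witness $x$ and an operator $T$, and push forward via the intertwining relation $S\circ\phi=\phi\circ T$ to exhibit $S(\phi(x))\in S(U_1)\cap V_1$. No gaps; your explicit justification of why the preimages are nonempty is slightly more careful than the paper's one-line assertion.
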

\begin{proof}
Since  $\Gamma$ and $\Gamma_1$ are quasi-similar, there exists a continuous map $\phi$ : $X\longrightarrow Y$ with dense range such that for all $T\in\Gamma,$ there exists $S\in\Gamma_1$ satisfying $S\circ\phi=\phi\circ T$.
Let $U$ and $V$ be two nonempty open subsets of $Y$. Since $\phi$ is of dense range, $\phi^{-1}(U)$ and $\phi^{-1}(V)$ are nonempty and open subsets of $X$. If $\Gamma$ is topological transitive in $X$, then there exist $y\in \phi^{-1}(U)$ and $T\in\Gamma$ such that $Ty\in\phi^{-1}(V)$, which implies that $\phi(y)\in U$ and $\phi(Ty)\in V$. Let $S\in\Gamma_1$ such that $S\circ\phi=\phi\circ T$, then $\phi(y)\in U$ and $S\phi(y)\in V$. From this, we deduce that $\Gamma_1$ is topological transitive in $Y$.
\end{proof}
\begin{corollary}
Let $X$ and $Y$ be topological vector spaces and let $\Gamma\subset\mathcal{B}(X)$ be similar to $\Gamma_1\subset\mathcal{B}(Y)$.
Then, $\Gamma$ is topologically transitive in $X$ if and only if $\Gamma_1$ is topologically transitive in $Y$. 
\end{corollary}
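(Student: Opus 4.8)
The plan is to derive both implications from the preceding proposition on quasi-similarity, exploiting that a homeomorphism can be inverted. Since a homeomorphism $\phi : X\longrightarrow Y$ is in particular a continuous map with dense range (indeed with full range), similarity of $\Gamma$ and $\Gamma_1$ is a special case of quasi-similarity. Hence the forward implication---if $\Gamma$ is topologically transitive in $X$, then $\Gamma_1$ is topologically transitive in $Y$---is immediate from the previous proposition, with no additional work required.

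For the converse I would use the key observation that, when $\phi$ is a homeomorphism, the intertwining relation $S\circ\phi=\phi\circ T$ is equivalent to $T\circ\phi^{-1}=\phi^{-1}\circ S$, obtained by composing with $\phi^{-1}$ on both sides. Thus each witnessing pair $(T,S)$ for the similarity of $\Gamma$ and $\Gamma_1$ via $\phi$ is, simultaneously, a witnessing pair for the similarity of $\Gamma_1$ and $\Gamma$ via the homeomorphism $\phi^{-1}:Y\longrightarrow X$. Consequently $\Gamma_1$ is quasi-similar to $\Gamma$, and applying the preceding proposition once more, now with the roles of $\Gamma$ and $\Gamma_1$ interchanged, yields that topological transitivity of $\Gamma_1$ in $Y$ forces topological transitivity of $\Gamma$ in $X$. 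Combining the two implications gives the stated equivalence.

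I expect the only delicate point to be the \emph{symmetry} of the similarity relation, i.e. verifying that $\phi^{-1}$ genuinely witnesses similarity of $\Gamma_1$ to $\Gamma$. This requires that for every $S\in\Gamma_1$ there is a corresponding $T\in\Gamma$ with $T\circ\phi^{-1}=\phi^{-1}\circ S$, so that the intertwining correspondence $T\leftrightarrow S$ can be read in both directions; this is precisely where the homeomorphism hypothesis (rather than mere dense range) is essential, since the equivalence $S\circ\phi=\phi\circ T \Longleftrightarrow T=\phi^{-1}\circ S\circ\phi$ relies on the existence and continuity of $\phi^{-1}$. Under the natural reading in which $\Gamma_1=\{\phi\, T\,\phi^{-1}\,:\,T\in\Gamma\}$, this surjectivity of the correspondence onto $\Gamma_1$ is automatic, and once it is in place no further argument is needed beyond the two applications of the previous proposition.
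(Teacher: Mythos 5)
Your proof is correct and is essentially the argument the paper intends (the corollary is stated there without proof): the forward direction is the quasi-similarity proposition applied to $\phi$, and the converse is the same proposition applied to $\phi^{-1}$. The symmetry point you flag is genuinely the only delicate step --- the paper's definition only demands that each $T\in\Gamma$ have a partner $S\in\Gamma_1$, so without surjectivity of that correspondence (your ``natural reading'' $\Gamma_1=\{\phi\,T\,\phi^{-1}:T\in\Gamma\}$) the ``only if'' direction could fail, e.g.\ if $\Gamma_1$ contained additional operators not intertwined with anything in $\Gamma$; granting that reading, your two applications of the proposition settle the equivalence.
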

In the following result, we give necessary and sufficient conditions for a set of operators to be topologically transitive.
\begin{theorem}\label{tt}
Let $X$ be a complex normed space and $\Gamma\subset \mathcal{B}(X)$. The following assertions are equivalent:
\begin{itemize}
\item[$(i)$] $\Gamma$ is topologically transitive;
\item[$(ii)$] For each $x$, $y\in X,$ there exists sequences $\{x_k\}$ in $X$ and $\{T_k\}$ in $\Gamma$ such that
$$x_k\longrightarrow x\hspace{0.3cm}\mbox{ and }\hspace{0.3cm}T_k( x_k)\longrightarrow y;$$
\item[$(iii)$] For each $x$, $y\in X$ and for $W$ a neighborhood of $0$, there exist $z\in X$ and $T\in\Gamma$  such that
$$T( z)-y\in W \hspace{0.3cm}\mbox{ and }\hspace{0.3cm} x-z\in W. $$
\end{itemize}
\end{theorem}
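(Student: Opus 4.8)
The plan is to establish the cycle of implications $(i)\Rightarrow(ii)\Rightarrow(iii)\Rightarrow(i)$, exploiting the norm structure of $X$ to translate freely between arbitrary nonempty open sets, open balls $B(a,\varepsilon)$, and neighborhoods of the origin. The whole argument rests on one recurring observation: in a normed space the condition $T(U)\cap V\neq\emptyset$ for balls $U=B(x,\varepsilon)$ and $V=B(y,\varepsilon)$ says precisely that some point within $\varepsilon$ of $x$ is mapped by $T$ to within $\varepsilon$ of $y$, and this is exactly the quantitative approximation appearing in $(ii)$ and $(iii)$.

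For $(i)\Rightarrow(ii)$, I would fix $x,y\in X$ and, for each $k\geq1$, apply topological transitivity to the pair $U_k=B(x,1/k)$ and $V_k=B(y,1/k)$, obtaining $T_k\in\Gamma$ with $T_k(U_k)\cap V_k\neq\emptyset$; choosing $x_k\in U_k$ with $T_k(x_k)\in V_k$ gives $\|x_k-x\|<1/k$ and $\|T_k(x_k)-y\|<1/k$, hence $x_k\to x$ and $T_k(x_k)\to y$. For $(ii)\Rightarrow(iii)$, I would fix $x,y\in X$ and a neighborhood $W$ of $0$, pick $\varepsilon>0$ with $B(0,\varepsilon)\subset W$, invoke the sequences from $(ii)$, and for $k$ large enough set $z=x_k$ and $T=T_k$; then both $x-z$ and $T(z)-y$ lie in $B(0,\varepsilon)\subset W$.

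For the closing implication $(iii)\Rightarrow(i)$, I would take arbitrary nonempty open sets $U,V$, choose $x\in U$ and $y\in V$, and pick $\varepsilon>0$ small enough that $B(x,\varepsilon)\subset U$ and $B(y,\varepsilon)\subset V$; applying $(iii)$ with $W=B(0,\varepsilon)$ produces $z\in X$ and $T\in\Gamma$ with $x-z\in W$ and $T(z)-y\in W$, so that $z\in B(x,\varepsilon)\subset U$ and $T(z)\in B(y,\varepsilon)\subset V$, giving $T(z)\in T(U)\cap V$. No step presents a genuine obstacle; the only care needed is bookkeeping with radii, namely matching a single $\varepsilon$ to both the domain and range constraints simultaneously, and keeping in mind that it is the normed (rather than merely topological vector space) hypothesis, via first countability, that licenses the use of balls and the passage to sequences in $(ii)$.
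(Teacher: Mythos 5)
Your proposal is correct and follows essentially the same route as the paper: balls $B(x,1/k)$, $B(y,1/k)$ for $(i)\Rightarrow(ii)$, extraction of a large index for $(ii)\Rightarrow(iii)$, and balls inside $U$, $V$ together with $W=B(0,\varepsilon)$ for $(iii)\Rightarrow(i)$. The only cosmetic difference is that the paper runs $(iii)\Rightarrow(i)$ through a sequence $W_k=B(0,1/k)$ and a limit argument, whereas you apply $(iii)$ once with a single well-chosen $\varepsilon$; both are sound.
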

\begin{proof}$(i)\Rightarrow(ii)$
Let $x$, $y\in X$. For all $k\geq1$, let $U_k=B(x,\frac{1}{k})$ and $V_k=B(y,\frac{1}{k})$. Then $U_k$ and $V_k$ are nonempty open subsets of $X$. Since $\Gamma$ is topologically transitive, there exists $T_k\in\Gamma$ such that $T_k(U_k)\cap V_k\neq\emptyset$. For all $k\geq1$, let $x_k\in U_k$ such that $T_k( x_k)\in V_k$, then
$$\Vert x_k-x \Vert<\frac{1}{k}\hspace{0.3cm}\mbox{ and }\hspace{0.3cm}\Vert T_k(x_k)-y \Vert<\frac{1}{k}$$
which implies that
$$x_k\longrightarrow x\hspace{0.3cm}\mbox{ and }\hspace{0.3cm} T_k(x_k)\longrightarrow y.$$

$(ii)\Rightarrow(iii)$ Let $x$, $y\in X$. There exists sequences $\{x_k\}$ in $X$ and $\{T_k\}$ in $\Gamma$ such that
$$x_k-x\longrightarrow 0\hspace{0.3cm}\mbox{ and }\hspace{0.3cm}T_k(x_k)-y\longrightarrow 0.$$
If $W$ is a neighborhood of $0$, then there exists $N\in\mathbb{N}$ such that $x-x_k\in W$ and $T_k(x_k)-y\in W$, for all $k\geq N$.

$(iii)\Rightarrow(i)$ Let $U$ and $V$ be two nonempty open subsets of $X$. Then there exists $x$, $y\in X$ such that $x\in U$ and $y\in V$. Since for all $k\geq1$,  $W_k=B(0,\frac{1}{k})$ is a neighborhood of $0$,  there exist $z_k\in X$ and $T_k\in\Gamma$ such that
$$\Vert T_k( z_k)-y\Vert<\frac{1}{k}\hspace{0.3cm}\mbox{ and }\hspace{0.3cm}\Vert x-z_k\Vert<\frac{1}{k}.$$
This implies that
$$z_k\longrightarrow x\hspace{0.3cm}\mbox{ and }\hspace{0.3cm}T_k(z_k)\longrightarrow y.$$
 Since $U$ and $V$ are nonempty open subsets of $X$, $x\in U$ and $y\in V$, there exists $N\in\mathbb{N}$ such that $z_k\in U$ and $T_k(z_k)\in V$, for all $k\geq N.$
\end{proof}

It is known from Birkhoff's transitivity theorem \cite{Birkhoff} that an operator $T \in \mathcal{B}(X)$ is hypercyclic if and only if it's topologically transitive.
Let $\Gamma$ be a subset of $\mathcal{B}(X).$ In what follows, we prove that
 $\Gamma$ is topologically transitive implies that $\Gamma$ is hypercyclic.
\begin{theorem}\label{th3}
Let $X$ be a second countable Baire complex topological vector space and $\Gamma$ a subset of $\mathcal{B}(X)$. Then, the following assertions are equivalent:
\begin{itemize}
\item[$(i)$]$HC(\Gamma)$ is dense in $X$;
\item[$(ii)$]$\Gamma$ is topologically transitive.
\end{itemize}
As a consequence, a topologically transitive set is hypercyclic.
\end{theorem}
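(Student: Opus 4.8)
The plan is to adapt the classical Birkhoff transitivity argument, the essential inputs being the $G_\delta$ description of $HC(\Gamma)$ furnished by Proposition \ref{5} and the Baire property of $X$. I would establish the two implications separately and then read off the consequence.

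For the implication $(i)\Rightarrow(ii)$, no category argument is needed. Assuming $HC(\Gamma)$ is dense, I would fix nonempty open sets $U,V\subset X$; density produces a point $x\in HC(\Gamma)\cap U$. Since $Orb(\Gamma,x)$ is dense in $X$, some $T\in\Gamma$ satisfies $Tx\in V$, and because $x\in U$ this gives $Tx\in T(U)\cap V$, so $T(U)\cap V\neq\emptyset$. Hence $\Gamma$ is topologically transitive.

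The substantive direction is $(ii)\Rightarrow(i)$. I would fix a countable basis $(U_n)_{n\geq1}$ of the topology and set $G_n=\bigcup_{T\in\Gamma}T^{-1}(U_n)$. The key observation is the elementary equivalence $T(U)\cap U_n\neq\emptyset \iff U\cap T^{-1}(U_n)\neq\emptyset$. Topological transitivity therefore says that for every nonempty open $U$ and every $n$ there is $T\in\Gamma$ with $U\cap T^{-1}(U_n)\neq\emptyset$, i.e.\ each $G_n$ meets every nonempty open set and is thus dense; moreover $G_n$ is open since each $T$ is continuous and $G_n$ is a union of preimages of an open set. By Proposition \ref{5} we have $HC(\Gamma)=\bigcap_{n\geq1}G_n$, a countable intersection of dense open sets, so the Baire property forces $HC(\Gamma)$ to be dense.

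Finally, the consequence follows at once: if $\Gamma$ is topologically transitive, then by $(ii)\Rightarrow(i)$ the set $HC(\Gamma)$ is dense, hence nonempty, so $\Gamma$ possesses a hypercyclic vector and is therefore hypercyclic. The only point requiring care is that second countability is precisely what allows $HC(\Gamma)$ to be expressed as a \emph{countable} intersection of the open sets $G_n$, which is exactly the hypothesis that renders the Baire category theorem applicable; beyond this bookkeeping I anticipate no genuine obstacle.
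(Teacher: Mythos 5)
Your proposal is correct and follows essentially the same route as the paper: the substantive direction $(ii)\Rightarrow(i)$ is identical (each $G_n=\bigcup_{T\in\Gamma}T^{-1}(U_n)$ is open and dense by transitivity, and Proposition \ref{5} plus the Baire property give density of $HC(\Gamma)$). The only difference is cosmetic: for $(i)\Rightarrow(ii)$ you argue directly from a hypercyclic vector in $U$ rather than reading density of the sets $A_n$ off the $G_\delta$ decomposition as the paper does, which is a perfectly valid (and slightly more self-contained) variant.
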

\begin{proof}
Since $X$ is a second countable topological vector space, we can consider $(U_m)_{m\geq1}$ a countable basis of the topology of $X$.\\
$(i)\Rightarrow (ii) : $ Assume that $HC(\Gamma)$ is dense in $X$. By Proposition \ref{5}, we have
$$HC(\Gamma)=\bigcap_{n\geq1}\bigcup_{T\in\Gamma}T^{-1}(U_n).$$
Hence, for all $n\geq 1$, the set $A_n=\bigcup_{T\in\Gamma}T^{-1}(U_n)$ is dense in $X$. As a consequence, for all $n$, $m\geq 1$ we have $A_n\cap U_m\neq\emptyset$. Thus, for all $n$, $m\geq 1$ there exists $T\in \Gamma$ such that $T( U_m)\cap U_n\neq\emptyset$. Since $(U_m)_{m\geq1}$ is a countable basis of the topology of $X$, it follows that $\Gamma$ is topologically transitive.\\
$(ii)\Rightarrow (i) : $ Assume that $\Gamma$ is a topologically transitive set. Let $n$, $m\geq 1$, then there exists $T\in \Gamma$ such that $T(U_m)\cap U_n\neq \emptyset$, that is  $T^{-1}(U_n)\cap U_m\neq \emptyset$. Hence, for all $n\geq1$, the set $\bigcup_{T\in\Gamma}T^{-1}(U_n)$ is dense in $X$. Since $X$ is a Baire space, it follows that
$HC(\Gamma)=\bigcap_{n\geq1}\bigcup_{T\in\Gamma}T^{-1}(U_n)$
is dense in $X$.
\end{proof}
The converse of Theorem \ref{th3} holds with additional assumptions.
\begin{theorem}\label{so}
Assume that $X$ is without isolated point and let $\Gamma\subset\mathcal{B}(X)$ such that for all $T$, $S\in\Gamma$ with $T\neq S$, there exists $A\in\Gamma$ such that $T=AS$. Then $\Gamma$ is hypercyclic implies that $\Gamma$ is topologically transitive.
\end{theorem}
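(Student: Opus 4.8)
The plan is to mimic the classical argument that a hypercyclic single operator is topologically transitive, replacing the powers of $T$ (where $T^m = T^{m-n}T^n$ supplies the needed factorization) by the division hypothesis on $\Gamma$. Fix a hypercyclic vector $x\in HC(\Gamma)$, so that $Orb(\Gamma,x)=\{Tx : T\in\Gamma\}$ is dense in $X$, and let $U,V$ be two arbitrary nonempty open subsets of $X$. By density there is some $S\in\Gamma$ with $Sx\in U$. If I can produce $T\in\Gamma$ with $T\neq S$ and $Tx\in V$, then the hypothesis yields $A\in\Gamma$ with $T=AS$, whence $A(Sx)=(AS)x=Tx\in V$ while $Sx\in U$; thus $Tx\in A(U)\cap V$, so $A(U)\cap V\neq\emptyset$, which is precisely the defining property of topological transitivity.

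The whole difficulty therefore reduces to finding an operator carrying $x$ into $V$ that is distinct from the already chosen $S$, and this is exactly where the absence of isolated points is used; I expect it to be the main obstacle of the proof. Since $X$ is Hausdorff the singleton $\{Sx\}$ is closed, so $V\setminus\{Sx\}$ is open, and it is nonempty because a nonempty open subset of a space without isolated points is infinite and hence cannot reduce to the single point $Sx$. By density of $Orb(\Gamma,x)$ there is a point $q\in Orb(\Gamma,x)\cap(V\setminus\{Sx\})$, say $q=Tx$ with $T\in\Gamma$. As $q\neq Sx$ we have $Tx\neq Sx$, and therefore $T\neq S$, while $Tx=q\in V$. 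This furnishes exactly the operator $T$ required in the first paragraph.

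Feeding this $T$ together with $S$ into the factorization hypothesis then closes the argument: choosing $A\in\Gamma$ with $T=AS$ gives $A(Sx)=Tx\in A(U)\cap V$, and since $U,V$ were arbitrary nonempty open sets, $\Gamma$ is topologically transitive. The role of the no-isolated-point assumption is worth emphasizing: were we forced to take $T=S$, the only admissible factorization would be $T=I\cdot S$, and the identity $I$ need not belong to $\Gamma$, so the division hypothesis would be unavailable. Ruling out this degenerate coincidence is precisely what ``$X$ without isolated point'' achieves here, playing the part that the infinitude of the return times $\{n : T^n x\in V\}$ plays in the classical single-operator case.
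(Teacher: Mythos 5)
Your proof is correct, and its core is the same as the paper's: both arguments take the two orbit points landing in the two open sets and use the factorization hypothesis $T=AS$ to manufacture a single $A\in\Gamma$ witnessing transitivity. Where you genuinely diverge is in the treatment of the degenerate coincidence $T=S$. The paper disposes of it by asserting that, since $X$ has no isolated point, ``we can suppose that $I\in\Gamma$'' and then using $I(U)\cap V\neq\emptyset$ when $T=S$; that step is not really justified as written (enlarging $\Gamma$ to $\Gamma\cup\{I\}$ changes the set whose transitivity is being proved, which is why the paper needs the follow-up remark about $\Gamma\cup\{I\}$). You instead use the no-isolated-point hypothesis exactly where it belongs: $V\setminus\{Sx\}$ is nonempty and open (singletons are closed in a Hausdorff topological vector space), so density of the orbit produces $Tx\in V$ with $Tx\neq Sx$, hence $T\neq S$, and the factorization hypothesis is always applicable. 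This makes your argument both cleaner and more rigorous than the printed one; the only (standard) assumption you are implicitly using is that $X$ is $T_1$, so that $V\setminus\{Sx\}$ is open. A cosmetic difference: you obtain $A(U)\cap V\neq\emptyset$ directly for the given ordered pair $(U,V)$, whereas the paper obtains $U\cap A(V)\neq\emptyset$ and relies on the arbitrariness of the pair; both are fine.
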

\begin{proof}
Since $X$ is without isolated point,  we can suppose that $I\in \Gamma$.
Since $\Gamma$ is a hypercyclic set, there exists $x\in X$ such that $Orb(\Gamma,x)$ is dense in $X$.
Let $U$ and $V$ be two nonempty open sets of $X$, then there exist $T$, $S\in\Gamma$ such that
\begin{equation}\label{e1}
 Tx\in U \hspace{0.2cm}\mbox{ and }\hspace{0.2cm} Sx\in V.
\end{equation}
If $T=S$, then $U\cap V\neq \emptyset$ which means that $I(U)\cap V\neq\emptyset$. Since $I\in\Gamma$, the result holds.\\
If $T\neq S$, then there exists $A\in\Gamma$ such that $T=AS$. By (\ref{e1}), we have
$$ A(Sx)\in U \hspace{0.2cm}\mbox{ and }\hspace{0.2cm} A(Sx)\in  A(V),$$
this means that $U\cap A(V)\neq \emptyset$.
Hence, $\Gamma$ is a topologically transitive set.
\end{proof}
\begin{remark}
Assume that $X$ is not necessary without isolated point and let $\Gamma\subset\mathcal{B}(X)$ be a hypercyclic set. If $\Gamma$ satisfies the condition of Theorem \ref{so}, then $\Gamma\cup\{I\}$ is  a topologically transitive set.
\end{remark}
The following example shows that the condition of Theorem \ref{so} is sufficient and not necessary.
\begin{example}\label{2}
Let $f$ be a nonzero linear form in $X$. Then there exists $e\in X\setminus\{0\}$ such that $f(e)\neq 0$. For all $x\in X$ let $T_x$ be an operator defined by
$$\begin{array}{ccccc}
T_x & : & X & \longrightarrow & X \\
 & & y & \longmapsto & \frac{f(x) }{ f(e) }x. \\
\end{array}$$
Let $\Gamma=\{T_x\mbox{ : }x\in X\}$. For all $x\in X$, we have $T_x(e)=x$, that is $x\in Orb(\Gamma,e)$. Hence
$$X=\overline{Orb(\Gamma,e)}.$$
Consequently; $\Gamma$ is a hypercyclic set and $e\in HC(\Gamma)$.
Moreover, for all $y\in X\setminus\{0\}$ we have
$$  \overline{Orb(\Gamma,y)}=\overline{\{ T_x(y) \mbox{ : }x\in X\}}=\overline{\left\lbrace \frac{f(y)}{f(e)}x\mbox{ : }x\in X\right\rbrace }=X. $$
Hence $y\in HC(\Gamma)$. Thus, $HC(\Gamma)$ is dense in $X$. By Theorem \ref{th3}, we deduce that $\Gamma$ is a topologically transitive set.
Now Let $e_1$, $e_2$ $\in X\setminus\{0\}$ such that $f(e_2)=0$. If there exists $x\in X$ such that $T_{e_1}=T_x T_{e_2}$, then $T_{e_{1}}(e)=T_x (T_{e_2}(e))$. We have $T_{e_{1}}(e)=e_1$ and
$$T_x (T_{e_2}(e))=T_x(e_2)= \frac{f( e_2) }{f(e)}x=0.$$
This is a contradiction.
\end{example}
In the next definition, we introduce the hypercyclic criterion for sets of operators.
\begin{definition}\label{cc}
Let $\Gamma\subset\mathcal{B}(X)$. We say that $\Gamma$ satisfies the criterion of hypercyclicity if there exist two dense subsets $X_0$ and $Y_0$ in $X$, a sequence $\{k\}$ of positives integers, a sequence of operators $\{T_k\}$ of $\Gamma$, and a sequence of maps $S_k$ : $Y_0\longrightarrow X$ such that:
\begin{itemize}
\item[$(i)$] $ T_kx\longrightarrow 0$ for all $x\in X_0$;
\item[$(ii)$] $ S_kx\longrightarrow 0$ for all $y\in Y_0$;
\item[$(iii)$] $T_kS_ky\longrightarrow y$ for all $y\in Y_0$.
\end{itemize}
\end{definition}
\begin{remark}
Let $T\in \mathcal{B}(X)$. If $\Gamma=\{T^n \mbox{ : }n\in\mathbb{N}\}$, then $\Gamma$ satisfies the hypercyclicity criterion as a set of operators if and only if $T$ satisfies the hypercyclicity criterion as an operator.
\end{remark}
\begin{theorem}\label{11}
Let $X$ be a second countable Baire complex topological vector space and $\Gamma$ a subset of $\mathcal{B}(X)$. If $\Gamma$ satisfies the criterion of hyercyclicity, then $\Gamma$ is topologically transitive. As a consequence, $\Gamma$ is hypercyclic.
\end{theorem}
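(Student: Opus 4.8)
The plan is to verify directly that $\Gamma$ is topologically transitive; the hypercyclicity conclusion then follows immediately from Theorem \ref{th3}, since $X$ is a second countable Baire complex topological vector space. So I would fix two arbitrary nonempty open subsets $U$ and $V$ of $X$, and let $X_0$, $Y_0$, $(T_k)$ and $(S_k)$ be the data supplied by the hypercyclicity criterion (Definition \ref{cc}).

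First I would exploit the density of $X_0$ and $Y_0$: since both sets are dense and $U$, $V$ are nonempty open, I may choose $x\in U\cap X_0$ and $y\in V\cap Y_0$. The key construction is then the sequence of vectors $z_k:=x+S_k y\in X$, which is legitimate because each $S_k$ maps $Y_0$ into $X$ and $y\in Y_0$.

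Next I would track the two relevant limits. By condition $(ii)$ we have $S_k y\longrightarrow 0$, so $z_k=x+S_k y\longrightarrow x$; since $U$ is open and $x\in U$, there is $N_1$ with $z_k\in U$ for all $k\geq N_1$. On the other hand, using linearity of each operator $T_k$ together with conditions $(i)$ and $(iii)$,
\[
T_k z_k = T_k x + T_k S_k y \longrightarrow 0 + y = y;
\]
since $V$ is open and $y\in V$, there is $N_2$ with $T_k z_k\in V$ for all $k\geq N_2$. Taking any $k\geq\max(N_1,N_2)$, the vector $z_k$ lies in $U$ while $T_k z_k$ lies in $V$, so $T_k(U)\cap V\neq\emptyset$ with $T_k\in\Gamma$. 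As $U$ and $V$ were arbitrary, $\Gamma$ is topologically transitive, and Theorem \ref{th3} then gives that $\Gamma$ is hypercyclic.

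This is the classical deduction of transitivity from the hypercyclicity criterion, so I do not anticipate a genuine obstacle. The only points needing care are that $U\cap X_0$ and $V\cap Y_0$ are nonempty (guaranteed by density), that $z_k$ is a well-defined element of $X$ (guaranteed because $S_k$ takes values in all of $X$, not merely in $X_0$ or $Y_0$), and that the two convergences $z_k\to x$ and $T_k z_k\to y$ must hold \emph{simultaneously} for a single index, which is why I pass beyond both thresholds $N_1$ and $N_2$. The linearity of the $T_k$ is essential in splitting $T_k z_k=T_k x+T_k S_k y$ so that conditions $(i)$ and $(iii)$ can be combined.
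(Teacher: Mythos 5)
Your proposal is correct and follows essentially the same argument as the paper: pick $x\in U\cap X_0$, $y\in V\cap Y_0$, form $z_k=x+S_ky$, and use conditions $(i)$--$(iii)$ to get $z_k\to x$ and $T_kz_k\to y$, then take $k$ past both thresholds. The only differences are cosmetic (you make the appeal to linearity of $T_k$ and the final invocation of Theorem \ref{th3} explicit, which the paper leaves implicit).
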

\begin{proof}
Let $U$ and $V$ be two nonempty open sets of $X$. Since $X_0$ and $Y_0$ are dense in $X$, there exist $x_0$ and $y_0$ in $X$ such that
$$ x_0\in X_0\cap U\hspace{1cm}\mbox{ and }\hspace{1cm}y_0\in Y_0\cap V. $$
For all $k\geq1$, let $z_k=x_0+S_ky $. By Definition \ref{cc} we have
$S_k y\longrightarrow 0$ which implies that $z_k\longrightarrow x_0$. Since $x_0\in U$
 and $U$ is open, there exists $N_1\in\mathbb{N}$ such that $z_k\in U$, for all $k\geq N_1$. On the other hand, $ T_k z_k=T_k x_0+T_k (S_k y_0)\longrightarrow y_0$. Since $y_0\in V$ and $V$ is open, there exists $N_2\in\mathbb{N}$ such that $T_k z_k\in V$, for all $k\geq N_2$. Let $N=\max\{N_1,N_2\}$, then  $z_k\in U$ and $ T_k z_k\in V$ for all $k\geq N$. Thus,
$$ T_k (U)\cap V\neq \emptyset,$$
for all $k\geq N$. Hence, $\Gamma$ is topologically transitive set.
\end{proof}
\begin{remark}
If $X$ is a complex normed space and $\Gamma$ is a subset of $\mathcal{B}(X)$ which satisfies the criterion of hypercyclicity, then by Theorem \ref{11} $\Gamma$ is a topologically transitive set. Thus, $\Gamma$ satisfies the conditions $(ii)$ and $(iii)$ of Theorem \ref{tt}.
\end{remark}
\section{Application}

In this section, we study the particular case where $\Gamma$ stands for a $C$-regularized semigroup.
Reccal \cite{CKM}, that an entire $C$-regularized group is an operator family $(S(z))_{z\in\mathbb{C}}$ on $\mathcal{B}(X)$ that satisfies:
\begin{itemize}
\item[$(1)$] $S(0)=C;$
\item[$(2)$] $S(z+w)C = S(z)S(w)$ for every $z,$ $w\in\mathbb{C}$,
\item[$(3)$] The mapping $z \mapsto S(z)x$, with $z\in\mathbb{C}$, is entire for every $x \in X$.
\end{itemize}
\begin{example}\label{ex1}
Let $X=\mathbb{C}$. For all $z\in\mathbb{C}$, let $S(z)x=\exp(z)x$, for all $x\in \mathbb{C}$. $(S(z))_{z\in\mathbb{C}}$ is a $C$-regularized group and we have
$\overline{Orb((S(z))_{z\in\mathbb{C}},1)}=\overline{\{\exp(z) \mbox{ : }z\in\mathbb{C}\}}=\mathbb{C},$ which implies that $(S(z))_{z\in\mathbb{C}}$ is hypercyclic.
\begin{remark}
Example \ref{ex1} shows that there exists a hypercyclic $C$-regularized group in finite dimensional. Moreover, the fact that the $C$-regularized group is hypercyclic does not implies that each $S(z)$ is a hypercyclic operator, for all $z\in \mathbb{C}$. This, is since finite dimensional space support no hypercyclic operator \cite{Rolewicz}.
\end{remark}
\end{example}
\begin{lemma}\label{lem}
Let $(S(z))_{z\in\mathbb{C}}$ be a hypercyclic $C$-regularized group. If $C$ has dense range, then $Cx\in HC((S(z))_{z\in\mathbb{C}})$, for all $x\in HC((S(z))_{z\in\mathbb{C}}).$
\end{lemma}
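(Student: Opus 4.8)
The plan is to recognize this lemma as a direct application of Proposition \ref{prop1} to the set $\Gamma=\{S(z)\mbox{ : }z\in\mathbb{C}\}$ with the distinguished operator $T=C$. To invoke that proposition I must check two things about $C$: that it has dense range, and that it lies in $\{\Gamma\}^{'}$, i.e.\ that $C$ commutes with every $S(z)$. The first is precisely the hypothesis of the lemma, so the only substantive point is to establish the commutation relation $CS(z)=S(z)C$.

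For the commutation I would exploit the defining functional equation of a $C$-regularized group, namely property $(2)$, $S(z+w)C=S(z)S(w)$ for all $z,w\in\mathbb{C}$, together with property $(1)$, $S(0)=C$. Specializing at $z=0$ gives $S(w)C=S(0)S(w)=CS(w)$ for every $w\in\mathbb{C}$. Hence $C$ commutes with each element of $\Gamma$, that is $C\in\{\Gamma\}^{'}$.

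With dense range and membership in $\{\Gamma\}^{'}$ both confirmed, Proposition \ref{prop1} applies verbatim and yields $Cx\in HC(\Gamma)$ for every $x\in HC(\Gamma)$, which is exactly the assertion. I do not anticipate any genuine obstacle: the lemma is essentially a corollary of Proposition \ref{prop1}, and the entire argument reduces to the one-line observation that setting $z=0$ in the group law forces $C$ to commute with every $S(w)$.
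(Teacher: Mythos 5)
Your proposal is correct and follows essentially the same route as the paper: the paper's proof is the one-line remark that $C\in\{(S(z))_{z\in\mathbb{C}}\}^{'}$, implicitly invoking Proposition \ref{prop1}. You merely make explicit the verification $S(w)C=S(0)S(w)=CS(w)$ from the group law, which the paper leaves to the reader.
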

\begin{proof}
This, since $C\in \{(S(z))_{z\in\mathbb{C}}\}^{'}$.
\end{proof}

By Theorem \ref{th3}, every topological transitive $C$-regularized group is hypercyclic. In the flowing we prove that the converse holds.
\begin{theorem}
Let $(S(z)_{z\in\mathbb{C}})$ be a $C$-regularized group such that $C$ has dense range. If $(S(z)_{z\in\mathbb{C}})$  is hypercyclic, then $(S(z)_{z\in\mathbb{C}})$ is topologically transitive.
\end{theorem}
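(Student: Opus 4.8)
The plan is to derive topological transitivity directly from hypercyclicity, using the dense range of $C$ to absorb the extra factor $C$ that the group law $S(z+w)C=S(z)S(w)$ forces into any composition. Write $\Gamma=\{S(z):z\in\mathbb{C}\}$ and fix a hypercyclic vector $x\in HC(\Gamma)$, which exists by hypothesis. Let $U$ and $V$ be arbitrary nonempty open subsets of $X$; the goal is to produce $z_3\in\mathbb{C}$ with $S(z_3)(U)\cap V\neq\emptyset$, which is exactly the transitivity condition.

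First I would pull $V$ back through $C$. Since $C$ has dense range, $C(X)\cap V\neq\emptyset$, so $C^{-1}(V)$ is a nonempty open set (open because $C$ is continuous). Because $Orb(\Gamma,x)$ is dense, it meets both $C^{-1}(V)$ and $U$, so there exist $z_1,z_2\in\mathbb{C}$ with $S(z_1)x\in C^{-1}(V)$, that is $C\,S(z_1)x\in V$, and with $S(z_2)x\in U$.

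The decisive step is the algebraic identity coming from the group law. Taking $z=z_1-z_2$ and $w=z_2$ in property $(2)$ gives $S(z_1-z_2)S(z_2)=S(z_1)C$. Since $C=S(0)\in\{\Gamma\}^{'}$ (as already used in the proof of Lemma \ref{lem}), we have $S(z_1)C=C\,S(z_1)$, and applying both sides to $x$ yields
$$S(z_1-z_2)\bigl(S(z_2)x\bigr)=S(z_1)Cx=C\,S(z_1)x\in V.$$
As $S(z_2)x\in U$, the point $C\,S(z_1)x$ lies in $S(z_1-z_2)(U)\cap V$, so this intersection is nonempty. Setting $z_3=z_1-z_2$ and recalling that $U,V$ were arbitrary shows that $\Gamma$ is topologically transitive.

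The only genuine obstacle is the stray $C$ created by the group law: one cannot simply copy the proof of Theorem \ref{so}, because $S(z_1)$ need not factor as $S(z_3)S(z_2)$ inside $\Gamma$, so the relation $S(z_1-z_2)S(z_2)=S(z_1)C$ always carries an extra $C$. The whole point is to choose $z_1$ so that $C\,S(z_1)x$, rather than $S(z_1)x$, lands in $V$; the dense range of $C$ is precisely what permits this choice and what renders the extra $C$ harmless. This is the same mechanism that underlies Lemma \ref{lem}.
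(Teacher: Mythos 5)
Your proof is correct and follows essentially the same route as the paper's: both arguments pull one of the two open sets back through $C$ (you pull back $V$, the paper pulls back $U$), use the hypercyclic vector to land in $C^{-1}(\cdot)$ and in the other set, and then invoke the group law $S(z_1-z_2)S(z_2)=S(z_1)C$ together with $CS(z_1)=S(z_1)C$ to produce the required intersection. The interchange of the roles of $U$ and $V$ is immaterial since the pair is arbitrary, so the two proofs are the same argument.
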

\begin{proof}
Let $x\in HC((S(z))_{z\in\mathbb{C}})$, then by Lemma \ref{lem}, $Cx\in HC((S(z))_{z\in\mathbb{C}}).$
Let $U$ and $V$ be two nonempty open subsets of $X$. There exist $z_1$, $z_2\in\mathbb{C}$ such that
\begin{equation}\label{e11}
S(z_1)x\in C^{-1}(U) \hspace{0.6cm}\mbox{ and }\hspace{0.6cm} S(z_2)x\in V.
\end{equation}
Let $z_3=z_1-z_2$. By $\ref{e11}$, we have
$$ S(z_3)(S(z_2)x)\in U\hspace{0.3cm}\mbox{ and }\hspace{0.3cm} S(z_3)(S(z_2)x)\in  S(z_3)(V),$$
which implies that $U\cap S(z_3)(V)\neq \emptyset$.
Hence, $(S(z))_{z\in\mathbb{C}}$ is a topologically transitive $C$-regularized group.
\end{proof}

If $X$ is a Banach infinite dimensional space, then we have the next theorem.
\begin{theorem}
Let $(S(z))_{z\in\mathbb{C}}$ be a $C$-regularized group on a Banach infinite dimensional space $X$. If $x \in X$ is a hypercyclic vector of $(S(z))_{z\in\mathbb{C}}$, then the set $\{S(z) x \mbox{ : }\vert z \vert\geq \vert w \vert\}$ is dense in $X$, for all $w\in\mathbb{C}$.
\end{theorem}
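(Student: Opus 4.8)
The plan is to play the density of the whole orbit off against the compactness of the bounded part that we discard, using that $X$ is infinite dimensional to make the discarded part topologically negligible. Fix $w\in\mathbb{C}$, put $R=|w|$, and split the orbit as $Orb((S(z))_{z\in\mathbb{C}},x)=K\cup T$, where $K=\{S(z)x:|z|\leq R\}$ and $T=\{S(z)x:|z|\geq R\}$. Since these two ranges of $z$ cover all of $\mathbb{C}$, this really is a decomposition of the full orbit, and the goal becomes showing that $T$ is dense.

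First I would record that $z\mapsto S(z)x$ is continuous on $\mathbb{C}$: by property $(3)$ in the definition of a $C$-regularized group this map is entire, hence certainly continuous. Since the closed disk $\{z\in\mathbb{C}:|z|\leq R\}$ is compact, its image $K$ is a compact subset of $X$.

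Next I would argue that $K$ is nowhere dense, and this is where the infinite dimensionality is essential. A compact set is closed, so it suffices to see that $K$ has empty interior. If $K$ contained a nonempty open set, it would contain a closed ball; that ball, being a closed subset of the compact set $K$, would itself be compact, and after translating and rescaling this would make the closed unit ball of $X$ compact, contradicting Riesz's lemma when $\dim X=\infty$. Hence $K$ is a closed set with empty interior.

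Finally I would assemble the pieces. Because $x\in HC((S(z))_{z\in\mathbb{C}})$, the orbit is dense, so
\[
X=\overline{Orb((S(z))_{z\in\mathbb{C}},x)}=\overline{K\cup T}=\overline{K}\cup\overline{T}=K\cup\overline{T},
\]
where the last equality uses that $K$ is closed. Were $\overline{T}\neq X$, the nonempty open set $X\setminus\overline{T}$ would be contained in $K$, contradicting that $K$ has empty interior. Therefore $\overline{T}=X$, that is, $\{S(z)x:|z|\geq|w|\}$ is dense in $X$. I expect the one genuinely nontrivial step to be the nowhere-density of $K$, since that is precisely the point at which the hypothesis $\dim X=\infty$ is consumed; the continuity of $z\mapsto S(z)x$ and the final density bookkeeping are routine.
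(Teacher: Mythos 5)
Your proof is correct and follows essentially the same route as the paper: split the orbit at $|z|=|w|$, observe that the bounded piece is the continuous image of a compact disk and hence compact, and invoke Riesz's theorem to conclude that a compact set cannot contain a nonempty open subset of an infinite-dimensional Banach space. If anything, your write-up is slightly more careful than the paper's, since you make the nowhere-density of $K$ and the identity $\overline{K\cup T}=K\cup\overline{T}$ explicit rather than passing through a bounded open set with compact closure.
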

\begin{proof}
Suppose that there exists $w_0\in\mathbb{C}$ such that $A=\{S(z) x \mbox{ : }\vert z \vert\geq \vert w_0\vert\}$ is not dense in X. Hence there exists a bounded open set $U$ such that $U \cap \overline{A}= \emptyset$.
Therefore we have
$$ U\subset \overline{\{S(z) x \mbox{ : }0\leq\vert z \vert\leq \vert w_0\vert\}} $$
by using the relation
$$ X=\overline{\{S(z) x \mbox{ : }z\in\mathbb{C}\}}=\overline{\{S(z) x \mbox{ : }\vert z \vert\geq \vert w_0\vert\}}\cup \overline{\{S(z) x \mbox{ : }0\leq\vert z \vert\leq \vert w_0\vert\}}, $$
which means that $\overline{U}$ is compact. Hence $X$ is finite dimensional, which contradicts that $X$ is infinite dimensional.
\end{proof}

\end{document}